\theoremstyle{theorem}
\newtheorem{corollary}{Corollary}
\newtheorem{lemma}[corollary]{Lemma}
\newtheorem{lemma*}[lem6]{Lemma}
\newtheorem{theorem}[corollary]{Theorem}
\newtheorem{problem}{Problem}
\begin{document}

\AtEndDocument{%
  \par
  \medskip
  \begin{tabular}{@{}l@{}}%
    \textsc{Thiago Assis} \\
    \textsc{Dept. of Computer Science} \\ 
    \textsc{Universidade Federal de Minas Gerais, Brazil} \\
    \textit{E-mail address}: \texttt{thiago.assis@dcc.ufmg.br}\\ \ \\
    \textsc{Gabriel Coutinho}\\
    \textsc{Dept. of Computer Science} \\ 
    \textsc{Universidade Federal de Minas Gerais, Brazil} \\
    \textit{E-mail address}: \texttt{gabriel@dcc.ufmg.br} \\ \ \\
    \textsc{Emanuel Juliano} \\
    \textsc{Dept. of Computer Science} \\ 
    \textsc{Universidade Federal de Minas Gerais, Brazil} \\
    \textit{E-mail address}: \texttt{emanuelsilva@dcc.ufmg.br}
    
  \end{tabular}}

\title{Spectral upper bounds for the Grundy number of a graph}
\author{Thiago Assis \and Gabriel Coutinho \and Emanuel Juliano\footnote{emanuelsilva@dcc.ufmg.br --- remaining affiliations in the end of the manuscript.}}
\date{\today}
\maketitle
\vspace{-0.8cm}

\begin{abstract} 
The Grundy number of a graph is the minimum number of colors needed to properly color the graph using the first-fit greedy algorithm regardless of the initial vertex ordering. Computing the Grundy number of a graph is an NP-Hard problem. There is a characterization in terms of induced subgraphs: a graph has a Grundy number at least k if and only if it contains a k-atom. In this paper, using properties of the matching polynomial, we determine the smallest possible largest eigenvalue of a k-atom. With this result, we present an upper bound for the Grundy number of a graph in terms of the largest eigenvalue of its adjacency matrix. We also present another upper bound using the largest eigenvalue and the size of the graph. Our bounds are asymptotically tight for some infinite families of graphs and provide improvements on the known bounds for the Grundy number of sparse random graphs.
\end{abstract}

\begin{center}
\textbf{Keywords}
Grundy number ; interlacing ; matching polynomial.
\end{center}

\section{Introduction}\label{intro}

A $k$-\textit{coloring} of a graph $G$ is a surjective function $f : V(G) \to [k]$, where $[k] = \{1,\dots,k\}$, and $f(x) \neq f(y)$ whenever $x$ is adjacent to $y$. The \textit{chromatic number} $\chi(G)$ is the minimum $k$ such that $G$ admits a k-coloring. Given a graph $G$ and an ordering $\sigma = (x_1, x_2, \dots, x_n)$ of $V(G)$, the \textit{first-fit coloring algorithm} colors vertex $x_i$ with the smallest color that is not present among the neighbors of $x_i$ in $\{x_1 \dots, x_{i-1}\}$. The \textit{Grundy number} $\Gamma(G)$ is the largest $k$ such that the first-fit coloring algorithm generates a $k$-coloring of $G$.

Let $\Delta(G)$ denote the largest degree of $G$. It is well known (and immediate) that
\[
\chi(G) \leq \Gamma(G) \leq \Delta(G) + 1.
\]
Both parameters $\chi(G)$ and $\Gamma(G)$ are NP-hard to compute \cite{ZAKER20063166} and even determining if $\Gamma(G) \leq \Delta(G)$ is NP-complete \cite{HavetFredericGrundy}.

Let $A$ denote the adjacency matrix of a graph $G$. Let $\lambda_1(G) \geq \lambda_2(G) \geq \dots \geq \lambda_n(G)$ denote the eigenvalues of $A$. It is known (see for instance \cite[Chapter 3]{brouwer2011spectra}) that

\[
\sqrt{\Delta(G)} \leq \lambda_1(G) \leq \Delta(G). 
\]

Equality on the left holds if and only if $G$ is the complete bipartite graph $K_{1, n}$, and equality on the right holds if and only if $G$ is regular. Wilf \cite{Wilf1967TheEO} showed a sharper bound for the chromatic number:
\[
\chi(G) \leq 1 + \lambda_1(G).
\]
This upper bound holds with equality if and only if $G$ is complete or an odd cycle (but no proof of this is known that does not make use of Brooks theorem), and, of course, Wilf's bound can be trivially improved by noticing that $\chi(G)$ is always an integer, whereas $\lambda_1(G)$ might not be.

Our main result is an upper bound for the Grundy number only in terms of the largest eigenvalue of the graph, and this is presented in Section \ref{sec:2}. We prove that
\[
\Gamma(G) \leq \frac{(\lambda_1(G) + o(1))^2}{2},
\]
where $o(1)$ denotes a function that tends to 0 as $\lambda_1(G)$ increases. While our main result is not a strict improvement on $\Gamma(G) \leq \Delta(G) + 1$ for all graphs, it does offer improvement specially for sparse graphs, as we will discuss later on.

As a consequence of our work, we also relate the Grundy number and the largest root of the matching polynomial of $G$. Our methods involve interlacing, the connection between the characteristic and the matching polynomial due to Godsil and Gutmann \cite{godsil1981theory}, and an elementary calculation of the largest eigenvalue of a family of $k$-atoms (to be defined in the next section).

In Section \ref{sec:3}, we discuss how to incorporate the number of vertices $n$ of $G$ in a spectral upper bound for $\Gamma(G)$, and prove the inequality:
\[
\Gamma(G) \leq O\left(\lambda_1(G) \left(\frac{n}{\lambda_1(G)}\right)^{1/3}\right).
\]

In Section \ref{sec:4}, we compare our results with known bounds for the Grundy number of random graphs, giving new upper bounds for the Grundy number of random graphs with non-constant edge density.

\section{Grundy number and the largest root of the matching polynomial} \label{sec:2}

For each positive integer $k$, a class of graphs denoted by $\mathbf{\mathcal{A}_k}$ was introduced by Zeker \cite{ZAKER20063166}, the class satisfies the following property. If the Grundy number of a graph $G$ is at least $k$, then $G$ contains a subgraph isomorphic to an element of $\mathbf{\mathcal{A}_k}$. Any element of $\mathbf{\mathcal{A}_k}$ is called \textit{$k$-atom}. $K_1$ is the only 1-atom and $K_2$ is the only 2-atom. A graph $H$ is a $(k + 1)$-atom if its vertices can be partitioned into an independent set $I$ and a $k$-atom $A_k$, such that each vertex of $A_k$ has exactly one edge incident to $I$ and each element from $I$ has at least one edge incident to $A_k$. In this paper, we use Zeker's definition of $k$-atom, but slightly different definitions are present in the literature. For instance, Effantin et al. \cite{effantin2016characterization} define $H$ to be a $(k+1)$-atom if there is \textit{at least} one edge from each vertex of $A_k$ to $I$. We state this difference because we do not use the complete characterization of Grundy numbers in terms of $k$-atoms from \cite[Theorem 1]{ZAKER20063166}, just the observation that if $\Gamma(G) \geq k$, then $G$ contains a $k$-atom as a subgraph.

We say that the \textit{seed} of $A_k$ is a vertex that could have been the initial vertex during its recursive construction. For each $k$ there exists a unique tree that is a $k$-atom, denoted by $T_k$. We have that $|V(T_k)| = 2^{k-1}$ and that $T_k$ is the largest $k$-atom.

Given a graph $G$ and a vertex $u$, the \textit{path tree} of $(G,u)$ is a rooted tree which vertices are in bijection with the paths in $G$ that start at $u$. The root is identified with the unique path of length $0$, and two vertices are neighbors in the path tree if and only if one of the corresponding paths is obtained from the other upon adjoining one edge in the end. 

\begin{lemma} \label{lem:subgraph}
    For any $k$-atom $A_k$ with seed $s$, $T_k$ is a subgraph of the path tree of $(A_k,s)$, and is equal to this path tree if and only if $A_k = T_k$.
\begin{proof}
    We will prove by induction on $k$. The base case is simple: $T_1$ is a subgraph of the path tree of any $A_1$, which is only a single vertex.
    
    Let $A_k$ be any $k$-atom, with vertex partition $V(A_k) = V(A_{k-1}) \cup I$, where $I$ is an independent set and $A_{k-1}$ is a $(k-1)$-atom. Let $s$ be a seed of $A_{k-1}$, thus also one for $A_k$. We will denote by $T = T(A_k,s)$ the path tree of $A_k$ with respect to $s$, and similarly, $T'=T(A_{k-1},s)$ for $A_{k-1}$.

    Because each vertex of $A_{k-1}$ receives a new neighbour when constructing $A_k$, it follows that all paths starting at $s$ and within $A_{k-1}$ can be extended with at least one extra edge in their end in $A_k$. Therefore the tree $T$ contains the tree obtained from $T'$ by adding one new leaf to all of its vertices, and will be equal to it if and only if each path could have been extended by exactly one edge, case which corresponds to when $A_k$ was obtained from $A_{k-1}$ upon adding an independent set of size $|V(A_{k-1})|$ matched with $A_{k-1}$.

    By induction, $T_{k-1}$ is a subgraph of $T'$, thus the paragraph above implies $T_k$ is a subgraph of $T$, and the equality characterization also follows.
\end{proof}
\end{lemma}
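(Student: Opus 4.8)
The plan is to argue by induction on $k$, following the recursive construction of $k$-atoms. The base case $k=1$ is immediate: $T_1 = K_1$, the only $1$-atom is $K_1$, and the path tree of a single vertex rooted at that vertex is again $K_1$. For the inductive step, write $V(A_k) = V(A_{k-1}) \cup I$ with $I$ independent and $A_{k-1}$ a $(k-1)$-atom, fix a seed $s$ of $A_{k-1}$ (hence of $A_k$), and put $T = T(A_k, s)$ and $T' = T(A_{k-1}, s)$. For a rooted tree $R$, write $R^{+}$ for the tree obtained by attaching one fresh pendant leaf to every vertex of $R$; note that $R \mapsto R^{+}$ is monotone with respect to rooted-subgraph inclusion and that, by the construction of the unique tree atom, $T_k = {T_{k-1}}^{+}$.

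The structural heart of the argument is to locate ${T'}^{+}$ inside $T$. First, the paths from $s$ that stay inside $A_{k-1}$ are exactly the vertices of $T'$, and the edge-adjoining adjacency is preserved, so $T'$ sits inside $T$ as a rooted subtree. Next, in the construction every vertex $v$ of $A_{k-1}$ gains a neighbour $w \in I$; hence any such path $P$ ending at $v$ prolongs to the path $Pw$, and since $w \in I \setminus V(P)$ this is legitimate, the assignment $P \mapsto Pw$ is injective, and its image is disjoint from $T'$. This exhibits ${T'}^{+}$ as a subgraph of $T$. Combining with the inductive hypothesis $T_{k-1} \subseteq T'$ and monotonicity gives
\[
T_k = {T_{k-1}}^{+} \subseteq {T'}^{+} \subseteq T,
\]
which is the subgraph claim.

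For the equality characterization I would use the ``path tree of a tree'' principle for the easy direction: if $A_k = T_k$ then, because a tree has a unique path from $s$ to each vertex, $T = T(T_k, s) \cong T_k$. For the converse, suppose $T = T_k$. Then the displayed chain collapses to a chain of equalities, so ${T_{k-1}}^{+} = {T'}^{+}$ and ${T'}^{+} = T$. Comparing vertex counts in the first equality forces $T' = T_{k-1}$, whence $A_{k-1} = T_{k-1}$ by the inductive equality characterization. Feeding $A_{k-1} = T_{k-1}$ into the second equality, one checks that no vertex $w \in I$ can have a second neighbour in $A_{k-1}$: otherwise, taking the unique $s$-path in the tree $T_{k-1}$ to the appropriate neighbour of $w$ and then stepping out to $w$ and back to the other neighbour would produce a path from $s$ not accounted for by ${T'}^{+}$, contradicting ${T'}^{+} = T$. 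Thus $I$ is perfectly matched to $A_{k-1}$ by pendant edges, i.e.\ $A_k = {T_{k-1}}^{+} = T_k$.

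I expect the last step --- the converse of the equality case --- to be the main obstacle, and it is the point the informal phrase ``each path could have been extended by exactly one edge'' is hiding. The subgraph inclusion is robust because additional edges of $A_k$ can only enlarge the path tree, but for equality one must rule out the extra branches created when a path enters $I$ at some $w$ and re-enters $A_{k-1}$ through another neighbour of $w$. Pinning down that these are absent precisely when $I$ attaches as a perfect matching of pendants (and that this, together with $A_{k-1} = T_{k-1}$, is exactly $A_k = T_k$) is where the care is needed.
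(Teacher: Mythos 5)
Your proposal is correct and follows essentially the same route as the paper: induction on $k$, embedding $T(A_{k-1},s)$ with one new leaf per vertex into $T(A_k,s)$ via the unique $I$-neighbour of each path's endpoint, and characterizing equality by the matching condition. The only difference is that you spell out the converse of the equality case (ruling out vertices of $I$ with two neighbours in $A_{k-1}$) in more detail than the paper, which compresses this into one sentence.
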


The matching polynomial of a graph $G$ on $n$ vertices is defined as
\begin{equation}
    \mu_G(x) = \sum_{M} (-1)^{|M|} x^{n-2|M|},\label{matchingpol}
\end{equation}
where the sum runs over all $M \subseteq E$ which are matchings of the graph. Denote by $\phi_G(x)$ the characteristic polynomial of $A(G)$. The connection between the matching polynomial and the characteristic polynomial of a graph has been well understood and has found countless applications, including in the recent breakthrough \cite{interlacing_familiesI}. A graph $G$ is a forest if and only if $\mu_G = \phi_G$ (see \cite[Corollary 4.2]{godsil1981theory}), but more generally, the roots of $\mu_G$ are all roots of the characteristic polynomial of the path tree of $G$, and, for any graph $G$ and vertex $u$, the following formula holds (see \cite[Theorem 5]{godsil1981theory}):
    \begin{equation}
        \frac{\mu_G}{\mu_{G \setminus u}} = 
        \frac{\mu_{T(G,u)}}{\mu_{T(G,u) \setminus u}}. \label{pathtree}
    \end{equation}
For the result below, let $\mu_1(G) \ge \dots \geq \mu_n(G)$ denote the roots of the matching polynomial of a graph $G$. We will also use the fact that $\lambda_1(G) \geq \mu_1(G)$ for all graphs, and if $G$ is connected, equality holds if and only if $G$ is a tree (\cite[Theorem 6]{godsil1981theory}).

\begin{lemma}
    Let $G$ be a connected graph with $\Gamma(G) \geq k$. Then, 
    \[
        \mu_1(G) \geq \lambda_1(T_k),
    \] 
    and equality holds if and only if $G = T_k$. Moreover, $\lambda_1(G) \geq \lambda_1(T_k)$.
\end{lemma}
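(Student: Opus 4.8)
The plan is to pass from $G$ to the path tree $T(G,s)$ of $G$ rooted at a seed $s$ of an embedded $k$-atom, to identify $\mu_1(G)$ with $\lambda_1(T(G,s))$, and then to squeeze $\lambda_1(T_k)$ underneath using Lemma \ref{lem:subgraph}. First I would invoke Zaker's observation: since $\Gamma(G)\ge k$, the graph $G$ contains a $k$-atom $A_k$ as a subgraph; let $s$ be a seed of $A_k$. Because every path in $A_k$ starting at $s$ is also such a path in $G$, and adjacency in a path tree (extension by one terminal edge) depends only on the paths themselves, $T(A_k,s)$ is a subgraph of $T(G,s)$; and by Lemma \ref{lem:subgraph}, $T_k$ is a subgraph of $T(A_k,s)$. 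Thus $T_k \subseteq T(A_k,s) \subseteq T(G,s)$, and monotonicity of the spectral radius under subgraphs gives $\lambda_1(T_k) \le \lambda_1(T(A_k,s)) \le \lambda_1(T(G,s))$.

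The main step is to prove $\mu_1(G) = \lambda_1(T(G,s))$ (the case $G=K_1$ being trivial, so I assume $|V(G)|\ge 2$). Set $\theta = \lambda_1(T(G,s))$. Since $T(G,s)$ is a forest, $\mu_{T(G,s)} = \phi_{T(G,s)}$, so $\theta$ is its largest root and $\mu_{T(G,s)}(\theta)=0$. Deleting the root $s$ from the connected tree $T(G,s)$ strictly decreases the spectral radius, so $\theta$ exceeds every root of $\mu_{T(G,s)\setminus s}$ and hence $\mu_{T(G,s)\setminus s}(\theta)\ne 0$. Clearing denominators in \eqref{pathtree} gives the polynomial identity $\mu_G\,\mu_{T(G,s)\setminus s} = \mu_{G\setminus s}\,\mu_{T(G,s)}$; evaluating at $\theta$ makes the right-hand side vanish, and since $\mu_{T(G,s)\setminus s}(\theta)\ne 0$ I conclude $\mu_G(\theta)=0$. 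Thus $\theta$ is a root of $\mu_G$, so $\mu_1(G)\ge \theta$; the reverse inequality is immediate from the cited fact that every root of $\mu_G$ is a root of $\phi_{T(G,s)}$, whose largest root is $\theta$. Combining with the first paragraph, $\mu_1(G)=\lambda_1(T(G,s))\ge \lambda_1(T_k)$, and the ``moreover'' follows from the cited inequality $\lambda_1(G)\ge \mu_1(G)$.

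For the equality case, if $\mu_1(G)=\lambda_1(T_k)$ then the whole chain collapses to $\lambda_1(T_k)=\lambda_1(T(A_k,s))=\lambda_1(T(G,s))$. Since these are all connected trees, strict monotonicity of the spectral radius under proper subgraphs forces $T_k=T(A_k,s)$ and $T(A_k,s)=T(G,s)$; the first equality together with Lemma \ref{lem:subgraph} gives $A_k=T_k$, so in particular $T_k\subseteq G$ and $|V(G)|\ge 2^{k-1}$, and the chain also yields $T(G,s)=T_k$. On the other hand $T_k$ has exactly $2^{k-1}$ vertices, so $G$ has exactly $2^{k-1}$ paths starting at $s$; as the endpoint map sends these paths onto $V(G)$, we get $|V(G)|\le 2^{k-1}$, whence $|V(G)|=2^{k-1}$ and the endpoint map is a bijection, which happens precisely when $G$ is a tree. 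A tree on $2^{k-1}$ vertices containing the tree $T_k$ on $2^{k-1}$ vertices must equal $T_k$, so $G=T_k$; the converse is clear since $\mu_1(T_k)=\lambda_1(T_k)$ for the tree $T_k$.

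The step I expect to be most delicate is the identity $\mu_1(G)=\lambda_1(T(G,s))$: the containment of roots only yields $\mu_1(G)\le\lambda_1(T(G,s))$ for free, and attaining the top path-tree eigenvalue as a matching root requires the Perron-type observation that deleting the root strictly lowers the spectral radius, so that the factor $\mu_{T(G,s)\setminus s}(\theta)$ cannot vanish. The other point needing care is converting the path-tree equality $T(G,s)=T_k$ back into $G=T_k$, which is exactly where the vertex-versus-path counting above does the work.
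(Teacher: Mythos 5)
Your proof is correct and follows essentially the same route as the paper's: both rest on Godsil's path-tree identity \eqref{pathtree}, the equality of matching and characteristic polynomials for forests, Perron--Frobenius, and Lemma \ref{lem:subgraph}, the only structural difference being that you run the whole chain at the level of path trees ($T_k \subseteq T(A_k,s) \subseteq T(G,s)$) and convert to $\mu_1(G)$ only at the top, while the paper descends $\mu_1(G) \geq \mu_1(A_k)$ directly via subgraph monotonicity of the largest matching root. Your equality case, which recovers $G = T_k$ from $T(G,s) = T_k$ by counting paths versus vertices, is a valid (and slightly more hands-on) substitute for the paper's appeal to strict monotonicity of $\mu_1$ under proper subgraphs, which yields $G = A_k$ at once.
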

\begin{proof}
    Let $A_k$ be a $k$-atom contained in $G$. We will verify the following chain of inequalities.
    \begin{equation}
        \lambda_1(G) \geq 
        \mu_1(G) \geq 
        \mu_1(A_k) = 
        \mu_1(T(A_k,s)) \geq 
        \mu_1(T_k) =
        \lambda_1(T_k). \label{chain}
    \end{equation}
    The first inequality is \cite[Theorem 6]{godsil1981theory}.

    The largest root of $\mu_G$ is also the largest root of $\mu_{T(G,u)}$, and they are both simple roots. This follows from \eqref{pathtree}, the fact that $\mu_{T(G,u)} = \phi_{T(G,u)}$, and the Perron-Frobenius theory (see \cite[Section 2.2]{brouwer2011spectra}) which among many other things implies that the largest root of the characteristic polynomial of a connected graph is strictly larger than the roots of the characteristic polynomial of the subgraphs. So the first equality in \eqref{chain} holds.
    
    Also, as a consequence of these facts and interlacing (see \cite[Section 2.5]{brouwer2011spectra}), the largest root of $\mu_G$ is strictly larger than any root of  $\mu_{H}$ for any proper subgraph $H$. The reason for this is that the path tree $T(H, u)$ is a proper induced subgraph of the path tree $T(G, u)$, as the set of paths starting in $u$ in $H$ form a proper subset of the paths starting in $u$ in $G$. Thus, the second inequality in \eqref{chain} holds because $A_k$ is a subgraph of $G$, with equality if and only if $G = A_k$.
    
    The last inequality, and its equality characterization, is due to Lemma~\ref{lem:subgraph} and the paragraphs above. The last equality comes from the matching polynomial and characteristic polynomial of trees being equal.    
\end{proof}

The tree $T_k$ is sometimes called binomial tree \cite{cormen2022introduction} or binary hypertree \cite{BARRIERE200936}, the latter name is due to the fact that it can be represented as the hierarchical product of several copies of the complete graph on two vertices; Barrière et al \cite{BARRIERE200936} studied the spectrum of this family of trees and provide a characterization on the largest eigenvalue of $T_k$.

The largest eigenvalue of $T_k$ verify the recurrence

\begin{equation}
    \lambda_1(T_k) = \frac{1}{2} \left(\lambda_1(T_{k-1}) + \sqrt{\lambda_1(T_{k-1})^2 + 4}\right)\label{recurrence}.
\end{equation}

The asymptotic behavior of $\lambda_1(T_k)$ as $k \to \infty$ is:

\begin{equation*}
     \lambda_1(T_k) \sim \sqrt{2(k-1)}\label{asymptotic}.
\end{equation*}

The recurrence relation follows by induction looking at the eigenvectors of the adjacency matrix of $T_k$. The asymptotic behavior follows from the observation that the recurrence tends to a power law $\alpha k^{\beta}$ as $k \to \infty$, implying that the asymptotic behavior of $\lambda_1(T_k)$ tends to $\sqrt{2(k-1)}$. We provide an alternative proof for the asymptotic behavior of $\lambda_1(T_k)$ and bound it quite tightly.

\begin{lemma}\label{lem:TkBound}
    The largest eigenvalue of $T_k$ is bounded by
    \[
    \sqrt{2(k-1)} \geq \lambda_1(T_k) \geq \sqrt{2(k-1)}-o(1).
    \]
    where $o(1)$ is a function that goes to $0$ as $k$ grows.
\end{lemma}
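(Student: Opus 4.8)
The plan is to work with the recurrence \eqref{recurrence} and track the \emph{squared} sequence $b_k := \lambda_1(T_k)^2$, for which the recurrence collapses into an almost-exact linear one. Writing \eqref{recurrence} as $2\lambda_1(T_k) - \lambda_1(T_{k-1}) = \sqrt{\lambda_1(T_{k-1})^2 + 4}$ and squaring both sides, all square roots cancel and one is left with the clean identity $\lambda_1(T_k)^2 - \lambda_1(T_k)\lambda_1(T_{k-1}) = 1$. Substituting $\lambda_1(T_k) - \lambda_1(T_{k-1}) = 2/\bigl(\lambda_1(T_{k-1}) + \sqrt{\lambda_1(T_{k-1})^2+4}\bigr)$ back into $b_k - b_{k-1} = \lambda_1(T_{k-1})\bigl(\lambda_1(T_k)-\lambda_1(T_{k-1})\bigr) + 1$ then yields the increment formula
\[
b_k - b_{k-1} = 2 - \frac{4}{\bigl(\lambda_1(T_{k-1}) + \sqrt{\lambda_1(T_{k-1})^2 + 4}\bigr)^2},
\]
so each step raises $b_k$ by exactly $2$ minus a small positive error $\varepsilon_k$. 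With the base value $b_1 = \lambda_1(K_1)^2 = 0$, telescoping gives both bounds, up to control of $\sum_k \varepsilon_k$. For the upper bound I would simply note $\varepsilon_k > 0$, whence $b_k - b_{k-1} < 2$ and $b_k < 2(k-1)$, i.e. $\lambda_1(T_k) \le \sqrt{2(k-1)}$.

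For the lower bound the task is to show the accumulated error $\sum_k \varepsilon_k$ grows slowly. Since $\lambda_1(T_{k-1}) + \sqrt{\lambda_1(T_{k-1})^2+4} > 2\lambda_1(T_{k-1})$, we have $\varepsilon_k < 1/b_{k-1}$, so what is needed is a lower bound on $b_{k-1}$. The sharp estimate $b_j \approx 2(j-1)$ cannot be used here without circularity, so first I would prove the crude bound $b_j \ge j-1$ for $j \ge 2$ by a separate induction: once $b_{j-1}\ge 1$ one has $2 - 1/b_{j-1} \ge 1$ and hence $b_j \ge b_{j-1} + 1$, the induction being started by $b_2 = \lambda_1(K_2)^2 = 1$. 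Feeding $b_{j-1} \ge j-2$ into $\varepsilon_j < 1/b_{j-1}$ turns $\sum_j \varepsilon_j$ into a harmonic sum, so that $2(k-1) - b_k = O(\log k)$.

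Finally, from $2(k-1) - O(\log k) \le b_k \le 2(k-1)$ and $\lambda_1(T_k) = \sqrt{b_k}$, the elementary inequality $\sqrt{1-x} \ge 1-x$ valid on $[0,1]$ gives
\[
\lambda_1(T_k) \ge \sqrt{2(k-1)}\,\Bigl(1 - \tfrac{O(\log k)}{2(k-1)}\Bigr) = \sqrt{2(k-1)} - O\!\left(\frac{\log k}{\sqrt{k}}\right) = \sqrt{2(k-1)} - o(1),
\]
which together with the upper bound is exactly the claimed two-sided estimate.

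The main obstacle is the error control in the lower bound. The per-step error $\varepsilon_k$ is of order $1/(2k)$, so the accumulated error is of order $\tfrac12\log k$; this is $o(\sqrt{k})$ and therefore disappears after taking the square root, but only just. Consequently the preliminary crude bound $b_j \ge j-1$ (together with the separate treatment of the initial indices, where $b_1 = 0$ makes the estimate $\varepsilon_k < 1/b_{k-1}$ vacuous) is precisely what is required to make the harmonic estimate legitimate; the rest of the argument is routine telescoping and a single concavity inequality.
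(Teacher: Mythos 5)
Your proof is correct, but it takes a genuinely different route from the paper's. You square the recurrence \eqref{recurrence}: the identity $b_k - b_{k-1} = 2 - 4\bigl(\lambda_1(T_{k-1}) + \sqrt{\lambda_1(T_{k-1})^2+4}\,\bigr)^{-2}$ checks out (rationalize $\tfrac{a-s}{a+s}$ with $s=\sqrt{a^2+4}$ to get $-4/(a+s)^2$), so $b_k$ telescopes to $2(k-1)$ minus an accumulated error; the crude preliminary induction $b_j \ge j-1$ is exactly the right device to break the circularity and turn the error sum into a harmonic series, and the concluding $\sqrt{1-x}\ge 1-x$ step is sound since $E_k \le 2(k-1)$. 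The paper instead works with the unsquared sequence $f_k$ throughout: the upper bound is an induction using concavity of $\sqrt{x}$, and the lower bound is a qualitative $\epsilon$-argument that fixes $\epsilon>0$, waits until $f_k$ is large enough that consecutive gaps of $\sqrt{2(\cdot)}$ near $k':=f_k^2/2+1$ are below $\epsilon$, and then shows the increments of $f_k$ eventually dominate those of $\sqrt{2(k-1)}$, forcing the deficit $d_{k+l}$ below $\epsilon$. Your version buys a quantitative rate, $\sqrt{2(k-1)}-\lambda_1(T_k) = O(\log k/\sqrt{k})$, which is strictly more information than the paper's unquantified $o(1)$ and would also make explicit the remark after Theorem \ref{thm:grundy_bound} that the error term is at most $1/2$; the paper's argument avoids any manipulation of the squared sequence but yields no rate. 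Both arguments take the recurrence \eqref{recurrence} as given, so neither is more self-contained than the other on that point.
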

\begin{proof}

Let $f_1 = 0$ and $f_{k+1} = \frac{1}{2}\left(f_k + \sqrt{f_k^2 + 4}\right)$ be the recurrence relation for the largest eigenvalue of $T_k$, following equation \eqref{recurrence}. First, we show that $f_k \leq \sqrt{2(k-1)}$. Notice that $f_1 = 0 \leq \sqrt{0}$. We proceed by induction,
\begin{align*}
f_{k+1} &= \frac{1}{2}\left(f_k + \sqrt{f_k^2 + 4}\right) \\
&\leq \frac{1}{2}\left(\sqrt{2(k-1)} + \sqrt{2(k-1) + 4}\right) \\
&\leq \sqrt{2k},
\end{align*}
where the last inequality follows from the fact that $\sqrt{x}$ is a concave function.


Now, we show that $f_k > \sqrt{2(k-1)} - \epsilon$ for any constant $\epsilon > 0$ and $k$ sufficiently large. First note that $f_k$ is an unbounded sequence of non-negative numbers, because the largest degree of $T_k$ is increasing and $f_k \geq \sqrt{\Delta(T_k)}$.

Consider $k$ large enough such that $k' := f_k^2/2+1$ satisfies 
\[\sqrt{2k'} - \sqrt{2(k'-1)} < \epsilon.\]
Let $d_k$ be the difference between $\sqrt{2(k-1)}$ and $f_k$:
\[d_{k} := \sqrt{2(k-1)} - f_k = \sqrt{2(k-1)} - \sqrt{2(k'-1)}.\]
We first notice that
\begin{align*}
f_{k+1} - f_k &= \frac{\sqrt{2(k'-1)} + \sqrt{2(k'-1) + 4}}{2} - \sqrt{2(k'-1)} \\
&= \frac{\sqrt{2(k'+1)} - \sqrt{2(k'-1)}}{2} \\
&> \sqrt{2(k'+1)} - \sqrt{2k'},
\end{align*}
where the last inequality follows from the fact that $\sqrt{x}$ is concave. 
Therefore,
\begin{equation*}
   d_{k+1} < d_{k} + \left(\sqrt{2k} - \sqrt{2(k-1)}\right) - \left(\sqrt{2(k'+1)} - \sqrt{2k'}\right).
\end{equation*}

Let $k'' := f_{k+1}^2/2$, and as $k'' < k' + 1$, we have that $\sqrt{2(k''+1)} - \sqrt{2k''} > \sqrt{2(k'+2)} - \sqrt{2(k'+1)}$ and the argument can be repeated. After $l$ steps we get:
\begin{align*}
d_{k+l} &< d_{k} + \left(\sqrt{2(k+l-1)} - \sqrt{2(k-1)}\right) - \left(\sqrt{2(k'+l)} - \sqrt{2k'}\right) \\
&= d_{k} + \left(\sqrt{2(k+l-1)} - \sqrt{2(k'+l)}\right) - \left(\sqrt{2(k-1)} - \sqrt{2k'}\right).
\end{align*}
For sufficiently large $l$, that is, for $l \to \infty$,
\begin{equation*}
\begin{split}    
d_{k+l} &< d_{k} + o(1) -  \left(\sqrt{2(k-1)} - \sqrt{2k'}\right) \\
&= o(1) + \sqrt{2k'} - \sqrt{2(k'-1)} \\
& < \epsilon.
\end{split}
\end{equation*}
\end{proof}

The previous result provide our desired upper bound for the Grundy number. We state it in terms of $\mu_1(G)$, but recall that $\mu_1(G) \leq \lambda_1(G)$.

\begin{theorem} \label{thm:grundy_bound}
Let $G$ be a graph with largest root of the matching polynomial $\mu_1(G)$, let $o(1)$ denotes a function that tends to 0 as $\mu_1(G)$ increases, then
\[
\Gamma(G) \leq \frac{(\mu_1(G)+o(1))^2}{2} + 1.
\] \qed
\end{theorem}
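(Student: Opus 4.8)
The plan is to read off the bound directly from the two lemmas preceding the statement. Set $k = \Gamma(G)$. The chain of inequalities \eqref{chain} gives $\mu_1(G) \ge \lambda_1(T_k)$, and Lemma~\ref{lem:TkBound} then yields $\mu_1(G) \ge \sqrt{2(k-1)} - o(1)$. Solving for $k$ is elementary: moving the error term across, squaring the nonnegative quantities $\sqrt{2(k-1)}$ and $\mu_1(G)+o(1)$, and isolating $k$ produces $k \le \tfrac{1}{2}\bigl(\mu_1(G)+o(1)\bigr)^2 + 1$, which is exactly the claim. At this level the theorem is a one-line manipulation; the real content is entirely in the two lemmas, and the remaining work is to make the error term rigorous and to remove the connectivity hypothesis that the chain lemma carries.

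For disconnected $G$ I would argue as follows. Since any $k$-atom is connected, the $k$-atom $A_k \subseteq G$ guaranteed by $\Gamma(G) \ge k$ sits inside a single connected component $G_i$. Applying the argument behind \eqref{chain} to the connected graph $G_i$, which contains $A_k$ as a subgraph, gives $\mu_1(G_i) \ge \lambda_1(T_k)$; and because the matching polynomial of a disjoint union factors as the product over components, $\mu_1(G) = \max_j \mu_1(G_j) \ge \mu_1(G_i)$. Hence $\mu_1(G) \ge \lambda_1(T_k)$ in general, and the connected case suffices.

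The step demanding the most care, and what I regard as the main obstacle, is that the error term in Lemma~\ref{lem:TkBound} is a function of $k$, whereas the theorem requires a function of $\mu_1(G)$; one must also ensure a single universal error function works for every $G$ at once, not merely asymptotically along some family. To handle this I would set $k^\ast(\mu_1) := \max\{k : \lambda_1(T_k) \le \mu_1\}$, which is well defined and finite since the recurrence \eqref{recurrence} shows $\lambda_1(T_k)$ is strictly increasing and unbounded, and which satisfies $k^\ast(\mu_1) \to \infty$ as $\mu_1 \to \infty$. The preceding paragraphs give $\Gamma(G) \le k^\ast(\mu_1(G))$. Writing $\epsilon(k)$ for the error in Lemma~\ref{lem:TkBound}, the maximality of $k^\ast$ forces $\sqrt{2(k^\ast-1)} \le \mu_1 + \epsilon(k^\ast)$, so setting $h(\mu_1) := \epsilon\bigl(k^\ast(\mu_1)\bigr)$ gives a single function with $h(\mu_1) \to 0$ as $\mu_1 \to \infty$ and, after squaring, $k^\ast \le \tfrac{1}{2}\bigl(\mu_1 + h(\mu_1)\bigr)^2 + 1$. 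Combining with $\Gamma(G) \le k^\ast(\mu_1(G))$ completes the proof.
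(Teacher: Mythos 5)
Your proposal is correct and follows essentially the same route as the paper, which states the theorem with no written proof precisely because it is the immediate combination of the chain \eqref{chain} with Lemma~\ref{lem:TkBound} that you carry out. The extra care you take with the disconnected case and with converting the error term from a function of $k$ to a function of $\mu_1(G)$ fills in details the paper leaves implicit, and both of those steps are handled correctly.
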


We point out that the term $o(1)$ above is easily shown to be always $\leq 1/2$.

\section{Spectral upper bound depending on the size of the graph} \label{sec:3}

Despite Theorem \ref{thm:grundy_bound} implying tight limits for the Grundy number of trees $T_k$, it does not provide much information when the largest eigenvalue is not close to the square root of the maximum degree. Therefore, a question that arises is whether we can incorporate the size of the graph into the bound so as to balance this out.

Wu and Elphick \cite{Elphick2015Achromatic} answer a question in this direction by finding an upper bound for the Grundy number in terms of the number of edges and the spectrum of the graph:
\[ \Gamma(G) \leq \frac{2|E(G)|}{\lambda_1(G)}. \]

Moreover, the Grundy number can also be bounded only in terms of the largest eigenvalue and the number of vertices by using the degeneracy of the graph. A graph $G$ is said to be $d$-degenerate if every subgraph has a vertex of degree at most $d$. The degeneracy of a graph is the smallest value of $d$ for which $G$ is $d$-degenerate. The Grundy number of $d$-degenerate graphs has been previously studied in \cite{irani1994coloring}, \cite{Balogh2008OnTF}, and \cite{chang2012first}. It has been proven that the Grundy number of $d$-degenerate graphs on $n$ vertices is bounded by:
\begin{equation*}
    \Gamma(G) \leq O(d \log n), 
\end{equation*}
and there are graphs that attain this bound.

This leads to an immediate spectral upper bound. Let $\delta(G)$ denote the smallest degree of $G$. It is known, as mentioned in \cite[Chapter 3]{brouwer2011spectra}, that
\begin{equation*}
    \delta(G) \leq \lambda_1(G). 
\end{equation*}
If the degeneracy of $G$ is $d$, then there exists a subgraph $H$ of $G$ with $\delta(H) = d$. Therefore, we can attain the spectral bound
\begin{equation}\label{eq:degeneracy}
    \Gamma(G) \leq O(d \log n) \leq O(\lambda_1(H) \log n) \leq O(\lambda_1(G) \log n). 
\end{equation}

The goal of this section is to establish a bound for the Grundy number that is not directly comparable to the inequality \eqref{eq:degeneracy}. Recall from Section \ref{intro} that a $k$-atom is constructed in $k$ steps, adding $a_i$ vertices at the $i$-th step. Let $S$ be the $n \times k$ normalized partition matrix whose rows correspond to vertices and columns to the sets of vertices added in each step. Then, $S^TS = I$ and we can define $B = S^TAS$ entrywise as
\begin{equation*}
    (B)_{ij} = \begin{cases}
        \sqrt{\frac{a_i}{a_j}}, & \text{if } i < j, \\
        0, & \text{if } i = j, \\
        \sqrt{\frac{a_j}{a_i}}, & \text{if } i > j.
    \end{cases}
\end{equation*}

By applying interlacing, we notice that

\[
\lambda_{1}(G) \geq \lambda_{1}(B) = \max_{x \in \mathds{R}^k} \frac{x^T B x}{x^Tx} \geq \frac{\1^TB\1}{k} = \frac{2}{k} \sum_{i < j} \sqrt{\frac{a_i}{a_j}}.
\]

Our goal for the rest of the section is to bound the right hand side of the inequality in terms of the Grundy number of the graph.

\begin{lemma}\label{lem:increasing}
    Let $n$ be fixed, and assume $\{a_i\}_{i = 1}^k$ is a possible sequence of sizes of sets used in the construction of a $k$-atom on $n$ vertices, so that $\sum a_i = n$. The sequence $a_1, \dots, a_k$ that achieves the minimum value for $\sum_{i < j} \sqrt{\frac{a_i}{a_j}}$ is non decreasing.
\end{lemma}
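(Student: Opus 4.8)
The plan is to prove the statement by an exchange argument: I will show that any feasible sequence that fails to be non-decreasing can be strictly improved by transposing a single adjacent pair of entries, while remaining a valid sequence of sizes. Consequently no minimizer can contain an adjacent inversion, which is exactly the assertion that it is non-decreasing.

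First I would record the constraints that a ``possible sequence of sizes'' must satisfy. Writing $a_i$ for the number of vertices added at the $i$-th step and $A_i := \sum_{j \le i} a_j$ for the size of the $i$-atom built so far, the definition of a $k$-atom forces $a_1 = 1$, each $a_i \ge 1$, and, since the independent set added when passing from the $(p-1)$-atom to the $p$-atom is matched into that atom, the inequality $a_p \le A_{p-1}$ for every $p \ge 2$. In particular $a_2 \le a_1 = 1$, so $a_2 = 1$, and hence any adjacent inversion $a_p > a_{p+1}$ must occur at a position $p \ge 2$.

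Next I would compute the effect of swapping the adjacent entries $a_p$ and $a_{p+1}$. In the objective $\sum_{i<j}\sqrt{a_i/a_j}$, every term involving an index $m < p$ contributes $\sqrt{a_m}\,(a_p^{-1/2}+a_{p+1}^{-1/2})$ and every term involving an index $m > p+1$ contributes $(a_p^{1/2}+a_{p+1}^{1/2})\,a_m^{-1/2}$; both expressions are symmetric in $a_p$ and $a_{p+1}$, hence unchanged by the swap, and there is no index strictly between $p$ and $p+1$. Thus the only term that changes is the direct term $\sqrt{a_p/a_{p+1}}$, which becomes $\sqrt{a_{p+1}/a_p}$. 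Writing $t = a_p/a_{p+1} > 1$, the objective changes by $t^{-1/2} - t^{1/2} < 0$, a strict decrease whenever $a_p > a_{p+1}$.

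Finally I would verify that the swap preserves feasibility, which I expect to be the only delicate point rather than the elementary monotonicity above. Transposing positions $p$ and $p+1$ changes only the partial sum at position $p$, lowering it from $A_{p-1}+a_p$ to $A_{p-1}+a_{p+1}$, while $A_{p+1}$ and all other partial sums are untouched. The only constraints needing re-checking are therefore $a_{p+1}\le A_{p-1}$ (the new value at position $p$) and $a_p \le A_{p-1}+a_{p+1}$ (the new value at position $p+1$); both follow at once from the original constraint $a_p \le A_{p-1}$ together with $a_{p+1} < a_p$. Since each such transposition strictly lowers the objective while keeping the sequence feasible and $n$ fixed, any minimizer must be free of adjacent inversions, i.e. non-decreasing.
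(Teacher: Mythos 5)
Your proof is correct and follows essentially the same route as the paper's: an adjacent-transposition exchange argument in which only the direct term $\sqrt{a_i/a_{i+1}}$ changes, yielding a strict decrease $\sqrt{a_{i+1}/a_i} - \sqrt{a_i/a_{i+1}} < 0$ whenever there is an inversion. Your additional verification that the swapped sequence remains a feasible size sequence for a $k$-atom (via the constraints $a_1 = 1$, $a_i \ge 1$, $a_p \le \sum_{j<p} a_j$) is a point the paper's one-line proof leaves implicit, and it is a welcome tightening rather than a deviation.
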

\begin{proof}
    Let $a_1, \dots, a_k$ be the sequence that achieves the minimum value, and assume by contradiction that exists an index $i < k$ such that $a_{i+1} < a_i$. If we exchange $a_i$ with $a_{i+1}$, the value of $\sum_{i < j} \sqrt{\frac{a_i}{a_j}}$ changes by
    \begin{equation*}
    \sqrt{\frac{a_{i+1}}{a_i}} - \sqrt{\frac{a_i}{a_{i+1}}} < 0. 
    \end{equation*}
\end{proof}

\begin{lemma}
    The largest eigenvalue of a $k$-atom $A_k$ with $n$ vertices is bounded by
    \[
    \lambda_1(A_k) \geq \frac{k\sqrt{k}}{4\sqrt{n}} - 2.
    \]
\end{lemma}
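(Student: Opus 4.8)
The plan is to feed the monotonicity from Lemma~\ref{lem:increasing} into the interlacing inequality $\lambda_1(A_k) \ge \frac{2}{k}\sum_{i<j}\sqrt{a_i/a_j}$ established just above, reduce the double sum to a single one-variable sum, and compare that sum against an explicit integral. First I would observe that it suffices to bound $\sum_{i<j}\sqrt{a_i/a_j}$ from below over the \emph{minimizing} sequence of sizes: the sequence of our given atom yields a value at least the minimum, and by Lemma~\ref{lem:increasing} the minimum is attained at a non-decreasing sequence, so any lower bound valid for non-decreasing sequences transfers to the atom at hand.

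For a non-decreasing $a_1 \le \dots \le a_k$ with $\sum a_i = n$, the tail estimate $n \ge \sum_{\ell \ge j} a_\ell \ge (k-j+1)a_j$ gives the pointwise bound $a_j \le n/(k-j+1)$. Combining this with $a_i \ge 1$ (each step adds at least one vertex) yields
\[
\sum_{i<j}\sqrt{\tfrac{a_i}{a_j}} \ \ge\ \frac{1}{\sqrt n}\sum_{j=2}^{k}(j-1)\sqrt{k-j+1} \ =\ \frac{1}{\sqrt n}\sum_{m=1}^{k-1}(k-m)\sqrt{m},
\]
after reindexing by $m = k-j+1$.

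The heart of the matter is the elementary estimate $\sum_{m=1}^{k-1}(k-m)\sqrt m \ge \tfrac{4}{15}k^{5/2} - O(k^{3/2})$, which I would obtain by comparing the sum with $\int_0^k (k-x)\sqrt x\,dx = \tfrac{4}{15}k^{5/2}$. Since $g(x)=(k-x)\sqrt x$ is unimodal, increasing up to $x=k/3$ and decreasing afterwards, splitting the range at $k/3$ and applying monotone sum--integral comparisons on each piece loses at most the peak value $g(k/3)=O(k^{3/2})$. Substituting back, $\lambda_1(A_k) \ge \frac{2}{k\sqrt n}\left(\tfrac{4}{15}k^{5/2}-O(k^{3/2})\right) = \tfrac{8}{15}\,\tfrac{k^{3/2}}{\sqrt n} - O\!\left(\tfrac{\sqrt k}{\sqrt n}\right)$. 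Because every $k$-atom has $n \ge k$, the error term is $O(1)$; and since $\tfrac{8}{15} > \tfrac14$, the surplus in the main term together with this $O(1)$ slack collapses into the additive constant, giving $\lambda_1(A_k) \ge \tfrac{k\sqrt k}{4\sqrt n} - 2$.

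The main obstacle I anticipate is the discretization bookkeeping: one must verify that the loss in passing from the integral $\tfrac{4}{15}k^{5/2}$ to the sum is genuinely $O(k^{3/2})$ and, after the $\tfrac{2}{k\sqrt n}$ scaling with $n\ge k$, stays below the constant $2$ uniformly in $k$ — including the small-$k$ regime, where the target bound may already be negative and hence trivially holds since $\lambda_1 \ge 0$. The algebraic steps (the pointwise bound $a_j \le n/(k-j+1)$, the reindexing, and the integral evaluation) are routine; the care lies entirely in matching constants so that $\tfrac{8}{15}$ degrades only to $\tfrac14$ with a clean additive $-2$.
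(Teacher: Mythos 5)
Your argument is correct and follows essentially the same route as the paper: the interlacing bound $\lambda_1 \ge \frac{2}{k}\sum_{i<j}\sqrt{a_i/a_j}$, the estimate $a_i \ge 1$ in the numerators, and Lemma~\ref{lem:increasing} to upper-bound the denominators. The only divergence is local: where the paper bounds the first $0.8k$ terms uniformly by $a_j \le 5n/k$, you use the sharper pointwise tail bound $a_j \le n/(k-j+1)$ together with an integral comparison, which yields the better leading constant $\tfrac{8}{15}$ in place of $\tfrac14$ and hence comfortably absorbs the $O(1)$ error into the additive $-2$.
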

\begin{proof}

The proof follows from a chain of inequalities, starting with the observation that $a_i \geq 1$, thus

\begin{equation*}
    \sum_{i<j \leq k} \sqrt{\frac{a_i}{a_j}} \geq \sum_{i<j \leq k} \sqrt{\frac{1}{a_j}} = \sum_{j \leq k} \frac{j-1}{\sqrt{a_j}}
\end{equation*}

Since the sequence is non-decreasing (Lemma \ref{lem:increasing}), the first $0.8 \cdot k$ elements are bounded, i.e., $a_j \leq \frac{5n}{k}$, giving,

\begin{equation*}
\sum_{j \leq k} \frac{j-1}{\sqrt{a_j}} \geq \sqrt{\frac{k}{5n}} \sum_{j \leq 0.8 \cdot k} (j-1) \geq \frac{k^2 \sqrt{k}}{8\sqrt{n}} - k.\\
\end{equation*}
Therefore,
\begin{equation*}
    \lambda_{1}(G) \geq \lambda_{1}(B) \geq \frac{2}{k} \sum_{i < j} {\sqrt{\frac{a_i}{a_j}}} \geq \frac{k \sqrt{k}}{4\sqrt{n}} - 2.
\end{equation*}
\end{proof}

\begin{corollary} \label{corol:grundy_bound_2}
    The Grundy number of a graph with $n$ vertices is bounded by
    \[
    \Gamma(G) \leq O\left(\lambda_1(G) \left(\frac{n}{\lambda_1(G)}\right)^{1/3}\right).
    \]
\end{corollary}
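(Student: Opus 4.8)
The plan is to turn the lower bound on $\lambda_1(A_k)$ from the preceding lemma into an upper bound on $k$, and then read off $\Gamma(G)$ via the atom characterization. Write $k = \Gamma(G)$ and let $n = |V(G)|$. By the defining property of $k$-atoms recalled in Section~\ref{sec:2}, since $\Gamma(G) \ge k$ the graph $G$ contains a $k$-atom $A_k$ as a subgraph, say on $n' \le n$ vertices. I would first argue $\lambda_1(A_k) \le \lambda_1(G)$: the induced subgraph $G[V(A_k)]$ satisfies $\lambda_1(G[V(A_k)]) \le \lambda_1(G)$ by Cauchy interlacing, and deleting the edges of $G[V(A_k)]$ not present in $A_k$ can only decrease the Perron root of a nonnegative symmetric matrix, so $\lambda_1(A_k) \le \lambda_1(G[V(A_k)]) \le \lambda_1(G)$.

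Next I would apply the preceding lemma to $A_k$. It gives $\lambda_1(A_k) \ge \frac{k\sqrt{k}}{4\sqrt{n'}} - 2$, and since $n' \le n$ the right-hand side is at least $\frac{k\sqrt{k}}{4\sqrt{n}} - 2$ (the expression is monotonically decreasing in the number of vertices). Combining with the previous paragraph yields the single inequality $\lambda_1(G) \ge \frac{k^{3/2}}{4\sqrt{n}} - 2$.

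It then remains to solve for $k$. Rearranging gives $k^{3/2} \le 4\sqrt{n}\,(\lambda_1(G) + 2)$, hence $k \le \bigl(4\sqrt{n}\,(\lambda_1(G)+2)\bigr)^{2/3} = 4^{2/3}\, n^{1/3}\,(\lambda_1(G)+2)^{2/3}$. Whenever $G$ has at least one edge we have $\lambda_1(G) \ge 1$, so $(\lambda_1(G)+2)^{2/3} \le 3^{2/3}\,\lambda_1(G)^{2/3}$ and thus $k \le 12^{2/3}\, \lambda_1(G)^{2/3}\, n^{1/3}$; the edgeless case is trivial since then $\Gamma(G) = 1$. Finally I would rewrite $\lambda_1(G)^{2/3} n^{1/3} = \lambda_1(G)\bigl(n/\lambda_1(G)\bigr)^{1/3}$ to match the stated form, giving $\Gamma(G) = O\!\bigl(\lambda_1(G)(n/\lambda_1(G))^{1/3}\bigr)$.

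The statement carries essentially no difficulty of its own: all the substance is in the preceding lemma bounding $\lambda_1(A_k)$ from below, and what is left here is bookkeeping. The only points that require a little care are (i) justifying $\lambda_1(A_k) \le \lambda_1(G)$ for a genuine (not necessarily induced) subgraph, which needs both interlacing and edge-monotonicity of the Perron root rather than interlacing alone, and (ii) checking that replacing $n'$ by the possibly larger $n$ weakens the lower bound in the correct direction, so that the final inequality remains valid.
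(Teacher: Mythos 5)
Your proposal is correct and is exactly the argument the paper intends: the corollary is stated without proof as an immediate consequence of the preceding lemma, and your derivation (locate a $k$-atom for $k=\Gamma(G)$, use edge- and vertex-monotonicity of the Perron root to get $\lambda_1(A_k)\le\lambda_1(G)$, weaken $n'$ to $n$, and solve $\lambda_1(G)\ge \frac{k^{3/2}}{4\sqrt{n}}-2$ for $k$) is the intended bookkeeping, carried out carefully.
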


\section{Grundy number of random graphs} \label{sec:4}
The Erdős–Rényi $\mathscr{G}(n, p)$ model of random graphs consists of all labeled graphs with vertex set $V = \{1, 2, \dots, n\}$ in which the edges are chosen independently and with probability $p$, where $0 < p < 1$, the value $p$ is sometimes called edge density. We refer to a random graph $G(n, p)$ as a graph on $n$ vertices generated according to the distribution $\mathscr{G}(n, p)$. We say that a graph property $A$ holds almost surely, or a.s. for brevity, in $G(n, p)$ if the probability that $G(n, p)$ has $A$ tends to one as the number of vertices $n$ tends to infinity \cite{random_graphs}. We say that a random graph is \textit{sparse} if the expected average degree is constant.

The Grundy number of random graphs was previously studied, particularly when the edge density $1/2$. For instance, Bollobás and Erdős \cite{Bollobas_Erdös_1976}, \cite[p. 294-298]{random_graphs} and McDiarmid \cite{mcdiarmid1979colouring} showed that a.s.,

\begin{equation}\label{eq:bollobas_bound}
    \Gamma(G) \leq \left(1 + \frac{5 \ln{\ln{n}}}{\ln{n}}\right) \frac{n}{\log_2{n}}.
\end{equation}

Our contribution in this section is to provide bounds for the Grundy number of random graphs where the edge density $p$ is a function on $n$ smaller than a constant. To achieve this goal, we first need to examine the asymptotic behaviour of the largest eigenvalue of random graphs. Krivelevich and Sudakov \cite{krivelevich_sudakov_2003} showed that the largest eigenvalue of a random graph $G(n, p)$ satisfies almost surely
\[\lambda_1(G) = (1 + o(1)) \max \{\sqrt{\Delta(G)}, np\}.\] 
This already gives an indication of for which random graphs Theorem \ref{thm:grundy_bound} derives a better bound than the maximum degree. Moreover, the same authors present the following result in \cite[Corollary 1.2]{krivelevich_sudakov_2003}, concerning sparse random graphs. For any constant $c > 0$, a.s. 
\begin{equation*}
    \lambda_1(G(n, c/n)) = (1 + o(1)) \sqrt{\frac{\ln n}{\ln \ln n}}.
\end{equation*}

This result, combined with Theorem \ref{thm:grundy_bound}, gives the following bound for the Grundy number of sparse random graphs:

\begin{corollary} \label{cor:sparse}
 For any constant $c > 0$, a.s
 \[
 \Gamma(G(n, c/n)) \leq \left(\frac{1}{2} + o(1)\right) \frac{\ln n}{\ln \ln n}.
 \]
\end{corollary}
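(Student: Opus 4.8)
The plan is to directly combine the two cited asymptotic results. The final statement, Corollary \ref{cor:sparse}, concerns sparse random graphs $G(n,c/n)$ for constant $c>0$. I already have two ingredients available: Theorem \ref{thm:grundy_bound}, which states that $\Gamma(G) \leq \frac{(\mu_1(G)+o(1))^2}{2}+1$, and the result of Krivelevich and Sudakov that $\lambda_1(G(n,c/n)) = (1+o(1))\sqrt{\frac{\ln n}{\ln\ln n}}$ almost surely. Since the excerpt records the universal inequality $\mu_1(G) \leq \lambda_1(G)$, the main task is simply to chain these facts together and verify that the error terms collapse into the stated $o(1)$.

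First I would substitute $\mu_1(G) \leq \lambda_1(G)$ into Theorem \ref{thm:grundy_bound} to obtain $\Gamma(G) \leq \frac{(\lambda_1(G)+o(1))^2}{2}+1$ almost surely. Next I would insert the Krivelevich--Sudakov estimate $\lambda_1(G(n,c/n)) = (1+o(1))\sqrt{\frac{\ln n}{\ln\ln n}}$, which holds almost surely, so that the bound becomes $\Gamma(G) \leq \frac{1}{2}\left((1+o(1))\sqrt{\frac{\ln n}{\ln\ln n}}+o(1)\right)^2+1$. Because $\sqrt{\frac{\ln n}{\ln\ln n}} \to \infty$, the additive $o(1)$ inside the square and the trailing $+1$ are both of lower order than the leading $\frac{\ln n}{2\ln\ln n}$ term, so expanding the square and absorbing the lower-order contributions yields $\Gamma(G(n,c/n)) \leq \left(\frac{1}{2}+o(1)\right)\frac{\ln n}{\ln\ln n}$, which is exactly the claim.

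I expect no serious obstacle here, since the statement is essentially a corollary in the literal sense: it is an almost-sure consequence of composing the two previously established asymptotic bounds. The only point requiring a little care is the meaning of the nested $o(1)$ terms. The $o(1)$ in Theorem \ref{thm:grundy_bound} is a function of $\mu_1(G)$ (hence of $\lambda_1(G)$), whereas the $o(1)$ in the Krivelevich--Sudakov estimate and in the final statement are functions of $n$. I would make explicit that because $\lambda_1(G(n,c/n)) \to \infty$ almost surely as $n \to \infty$, every $o(1)$ term that is a function of $\lambda_1(G)$ is also $o(1)$ as a function of $n$ on the almost-sure event, so all error terms may legitimately be combined into a single $o(1)$ in $n$. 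With that bookkeeping settled, the two substitutions complete the proof.
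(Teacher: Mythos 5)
Your proposal is correct and follows exactly the route the paper intends: the paper derives Corollary \ref{cor:sparse} directly by combining Theorem \ref{thm:grundy_bound} (via $\mu_1(G)\leq\lambda_1(G)$) with the Krivelevich--Sudakov estimate for $\lambda_1(G(n,c/n))$, leaving the $o(1)$ bookkeeping implicit. Your explicit remark that the $o(1)$ in Theorem \ref{thm:grundy_bound} is a function of $\mu_1(G)$ and must be converted to an $o(1)$ in $n$ using $\lambda_1(G(n,c/n))\to\infty$ a.s.\ is a welcome clarification of a step the paper glosses over.
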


We can also apply Corollary \ref{corol:grundy_bound_2} on random graphs with largest eigenvalue close to $(1+o(1))np$, a condition fulfilled when the edge density satisfies $1/\sqrt{n} \leq p$.

\begin{corollary} \label{cor:dense}
 For any edge density $1/\sqrt{n} \leq p$, a.s.
 \[
 \Gamma(G(n, p)) \leq O\left(np^{2/3}\right).
 \]
 
\end{corollary}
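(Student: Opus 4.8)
The plan is to apply Corollary~\ref{corol:grundy_bound_2} directly, substituting the known asymptotic value of $\lambda_1(G(n,p))$ in the regime where the spectral radius is governed by the average degree $np$ rather than by $\sqrt{\Delta}$. First I would invoke the Krivelevich--Sudakov result stated above, namely that a.s. $\lambda_1(G(n,p)) = (1+o(1))\max\{\sqrt{\Delta(G)}, np\}$, and argue that for $p \geq 1/\sqrt{n}$ the term $np$ dominates. Concretely, in this range $np \geq \sqrt{n}$, whereas standard concentration bounds give $\Delta(G) = (1+o(1))np$ a.s.\ when $np \gg \ln n$ (and more crudely $\Delta(G) \leq np + O(\sqrt{np\ln n})$ always), so $\sqrt{\Delta(G)} = O(\sqrt{np})$, which is asymptotically negligible against $np$ once $np \to \infty$. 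Hence a.s.\ $\lambda_1(G(n,p)) = (1+o(1))\,np$.

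Next I would plug $\lambda_1 = (1+o(1))np$ into the bound of Corollary~\ref{corol:grundy_bound_2}. This gives
\begin{equation*}
\Gamma(G(n,p)) \leq O\!\left(np \left(\frac{n}{np}\right)^{1/3}\right) = O\!\left(np \cdot p^{-1/3}\right) = O\!\left(n\,p^{2/3}\right),
\end{equation*}
which is exactly the claimed bound. The algebraic simplification is routine: the factor $(n/(np))^{1/3} = (1/p)^{1/3} = p^{-1/3}$, and multiplying by $np$ yields $np^{2/3}$. Since the $o(1)$ correction to $\lambda_1$ only affects constants, it is absorbed into the $O(\cdot)$.

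The main obstacle, and the only point requiring genuine care, is justifying that the ``$\lambda_1 \approx np$'' branch of the Krivelevich--Sudakov dichotomy is the operative one across the \emph{entire} range $1/\sqrt{n} \leq p$, including the sparser end near $p = 1/\sqrt{n}$ where $np$ and $\Delta$ are closest. Here I would note that even in the worst case $\sqrt{\Delta} = O(\sqrt{np}) = o(np)$ precisely because $np \to \infty$ throughout (as $np \geq \sqrt{n} \to \infty$), so the maximum is always asymptotically $np$. One subtlety worth flagging is whether Corollary~\ref{corol:grundy_bound_2} is being applied to a fixed realization or in expectation; since it is a deterministic bound valid for every graph on $n$ vertices, it holds pointwise, and combining it with the a.s.\ statement on $\lambda_1$ immediately yields an a.s.\ statement on $\Gamma$. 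I expect no further difficulty, as the corollary does the heavy lifting and this result is essentially a substitution.
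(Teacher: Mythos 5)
Your proposal is correct and matches the paper's intended argument exactly: the paper states Corollary~\ref{cor:dense} as an immediate consequence of Corollary~\ref{corol:grundy_bound_2} together with the Krivelevich--Sudakov asymptotic $\lambda_1(G(n,p)) = (1+o(1))np$ in the regime $p \geq 1/\sqrt{n}$, and your substitution $np\,(n/(np))^{1/3} = np^{2/3}$ is the whole computation. Your extra care in checking that the $np$ branch of the maximum dominates is a welcome elaboration of what the paper leaves implicit.
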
 
\section{Conclusion}

We presented two spectral upper bounds to the Grundy number of a graph. The first is in terms of the largest eigenvalue of the corresponding graph alone; a result achieved by characterizing the graphs with the smallest largest eigenvalue among all $k$-atoms. The second bound takes into account the size of the $k$-atom, a result achieved by bounding the largest eigenvalue of a matrix $B$ obtained by interlacing on the adjacency matrix. Lastly, we applied these two results to bound the Grundy number of random graphs with non-constant edge density.

Theorem \ref{thm:grundy_bound} is tight for the largest $k$-atom, the $T_k$ tree, and Corollary \ref{corol:grundy_bound_2} is asymptotically tight for the smallest $k$-atom, which is the complete graph $K_k$. However, it is unknown weather the spectral bound \eqref{eq:degeneracy}, obtained from the degeneracy of the graph is tight or if it can be improved.

\begin{problem}
    Characterize a family of graphs $\mathcal{F}$ for which $\Gamma(G) \geq \Omega(\lambda_1(G) \log n)$ for all $G \in \mathcal{F}$ or improve inequality \eqref{eq:degeneracy}.
\end{problem}

The inequalities for the Grundy number presented in this paper offer only upper bounds on the Grundy number; however, a spectral lower bound that is not a lower bound for the chromatic number remains unknown. A primary challenge in finding a spectral lower bound is the fact that we may no longer be able to use the characterization in terms of induced $k$-atoms, as interlacing should yield an inequality in the opposite direction.

\begin{problem}
    Find a lower bound on $\Gamma(G)$ in terms of the spectrum of $G$ that is not (always) a lower bound for $\chi(G)$.
\end{problem}

\section*{Acknowledgements}

We acknowledge Michael Tait for conversations about the topic of this paper. All authors acknowledge the financial support from CNPq and FAPEMIG.

\bibliographystyle{plain}
\IfFileExists{references.bib}
{\bibliography{references.bib}}
{\bibliography{../references}}

	
\end{document}